\numberwithin{equation}{section}
\font\tenscrpt=eusm10
\font\sevenscrpt=eusm10 scaled 700
\font\fivescrpt=eusm10 scaled 500
\def\scrpt#1{{\fam\eusmfam\relax#1}}
\newtheorem{thm}{Theorem}[section]
\newtheorem{lem}{Lemma}[section]
\newtheorem{rem}{Remark}[section]
\newcommand{\thmref}[1]{Theorem~{\rm $\ref{#1}$}}
\newcommand{\lemref}[1]{Lemma~{\rm $\ref{#1}$}}
\newcommand{\remref}[1]{Remark~{\rm $\ref{#1}$}}
\newcommand{\eqnref}[1]{{\rm (\ref{#1})}}
\def\eqdef{\overset{\triangle}{=}}
\def\dstyle{\displaystyle}
\def\dstyle{\displaystyle}
\def\eqdef{\overset{\triangle}{=}}
\def\utx{u(t,x)}
\def\vsx{v(s,x)}
\def\vsxe{v_\epsilon(s,x)}
\def\utxe{u_\epsilon(t,x)}
\def\Xx{X^x}
\def\RBM{|B(t)|}
\def\BTP{{\mathbb X}^x_{B}(t)}
\def\BTPr{{\mathbb X}^x_{B}(r)}
\def\eBTP{{\mathbb X}^x_{\epsilon B}(t)}
\def\BTPz{{\mathbb X}^0_{B}(t)}
\def\BTPez{{\mathbb X}^0_{\epsilon B}(t)}
\def\EBTP{{\mathbb X}^x_{B,e}(t)}
\def\EeBTP{{\mathbb X}^x_{\epsilon B,e}(t)}
\def\kEBTP{{\mathbb X}^{x,k}_{B,e}(t)}
\def\kEeBTP{{\mathbb X}^{x,k}_{\epsilon B,e}(t)}
\def\kEBTPr{{\mathbb X}^{x,k}_{B,e}(r)}
\def\oEBTP{{\mathbb X}^{x,1}_{B,e}(t)}
\def\P{{\mathbb P}}
\def\EP{{\mathbb E}_{\P}}
\def\N{{\mathbb N}}
\def\R{{\mathbb R}}
\def\Rd{{\mathbb R}^d}
\def\Rp{{\mathbb R}_+}
\def\OFP{(\Omega,{\scrpt F},\{\Filt\},\P)}
\def\Filt{{\scrpt F}_t}
\def\SG{\scrpt T}
\def\GN{\scrpt A}
\def\eil1{e_i^\clubsuit(1)}
\def\el1{e^\clubsuit_1}
 \newcounter{crcounter}
\begin{document}
\title[BTP and fourth order PDEs II]{Brownian-Time Processes: The PDE Connection II and the Corresponding Feynman-Kac Formula}
\author{Hassan Allouba}
\thanks{Supported in part by NSA grant MDA904-02-1-0083}
\address{Department of Mathematics,
Indiana University, Bloomington, IN 47405-7106}
\urladdr{http://php.indiana.edu/\~{ }allouba}
\email{allouba@indiana.edu}
\subjclass{Primary 60H30, 60J45, 60J35; Secondary 60J60, 60J65}
\date{November 15, 2001.}
\keywords{Brownian-time processes, initially perturbed fourth order PDEs, Brownian-time Feynman-Kac formula, iterated Brownian motion}
\begin{abstract}
We delve deeper into our study of the connection of Brownian-time processes (BTPs) to fourth order parabolic PDEs, which we introduced in a recent joint
article with W.~Zheng.   Probabilistically,
BTPs and their cousins BTPs with excursions form a unifying class of interesting stochastic processes that includes the celebrated IBM of Burdzy and
other new intriguing processes, and is also connected to the Markov snake of Le Gall.   BTPs also offer a new connection of probability to PDEs that is
fundamentally different from the  Markovian one.    They solve fourth order PDEs in which the initial function plays an important role in the PDE itself,
not only as initial data.    We connect  two such types of interesting  and new PDEs to BTPs.  The first is obtained by running the BTP and then integrating along its path, and the
second type of PDEs is related to what we call the Feynman-Kac formula for BTPs.  A special case of the second type is a step towards a probabilistic solution
to linearized Cahn-Hilliard and Kuramoto-Sivashinsky type PDEs, which we  tackle in an upcoming paper.
\end{abstract}
\maketitle
 \section{Introduction and statements of results}
 Let $B(t)$ be a one-dimensional Brownian motion starting at $0$ and $X^x(t)$
be an independent $\Rd$-valued continuous Markov process started at $x$, both defined on
a probability space $\OFP$.  We call the process
$\BTP\eqdef X^x(|B(t)|)$
a Brownian-time process (BTP): the regular clock $t$ is replaced with the Brownian clock $|B(t)|$.  In the special case where $X^x$ is a Brownian
motion starting at $x$ we call the process $\BTP$  a Brownian-time Brownian
motion (BTBM).  Excursions-based Brownian-time processes (EBTPs) are obtained
from BTPs by breaking up the path of $\RBM$ into excursion
intervals---maximal intervals $(r,s)$ of time on which $\RBM>0$---and, on each
such interval, we pick
an independent copy of the Markov process $\Xx$ from a finite or an infinite
collection.   Frequently  in applied PDEs (like the Allen-Cahn and Cahn-Hilliard and others), an order parameter $\epsilon$ with some physical significance
is an important part of the PDE; and to accomodate such a parameter, we introduce the $\epsilon$-scaled BTPs $\eBTP\eqdef\Xx(\epsilon|B(t)|)$ and their
excursion cousins (see \thmref{PDE2th} below for a PDE connection).

BTPs and EBTPs may be regarded as canonical constructions for several famous as well as interesting new processes.  To see this,
observe that the following processes have the one dimensional distribution $\P(\BTP\in dy)$:
\begin{enumerate}
\renewcommand{\labelenumi}{(\alph{enumi})}
\item Markov snake---when $|B(t)|$ increases we generate a new
independent path.  See Le Gall (\cite{LeGall1}, \cite{LeGall2}, and \cite{LeGall3})
for applications to the nonlinear PDE $\Delta u=u^2$.
\item Let $X^{x,1}(t), \ldots,X^{x,k}(t)$ be independent copies of
$\Xx(t)$ starting from point $x$.  On each excursion interval of $\RBM$ use one of the
$k$ copies chosen at random.  When $x=0$, $X^x$ is a Brownian motion starting
at $0$, and $k=2$ this reduces to the iterated Brownian motion or IBM (see Burdzy \cite{Burdzy1, Burdzy2} Burdzy et al.~\cite{BD1,BD2} and
Khoshnevisan et al.~\cite{DL}).  We identify such a process by
the abbreviation $k$EBTP and we denote it by $\kEBTP$.  Of course, when $k=1$ we obtain a BTP.
\item Use an independent copy of $X^x$ on each excursion interval of $\RBM$.
This is the $k\to\infty$ weak limit of (b) (for a rigorous statement and proof, see the
Appendix of \cite{AZ}).  It is
intermediate between IBM and the Markov snake.
Here, we go forward on a new independent path only after $|B(t)|$
reaches $0$.  This process is abbreviated as EBTP and is denoted by $\EBTP$.
\end{enumerate}
As in the case of standard Brownian motion (and more generally diffusions), there is a host of interesting connections of BTPs to PDEs.  However, unlike
the Brownian motion's link to PDEs (see e.g., \cite{Ba1,Ba2,D,KS}), the PDEs here are fourth order and they are distinguished by the feature that the initial
function is a fundamental part of the PDE itself, not only as initial data.   We call such PDEs initially perturbed.

In this paper we always assume that the generator $\GN$ of the outer Markov process $\Xx$, and its associated semigroup $\SG_s$, satisfy the property
\begin{equation}
\begin{split}
&f :\Rd\to\R\mbox{ bounded and } D_{ij} f \mbox{ is H\"older continuous }\forall\ 1\le i,j\le d\\
&\implies \frac{\partial^2\SG_s f(x)}{\partial s^2}=\GN^2\SG_s f \mbox{ is continuous on }(0,\infty)\times\Rd,  \mbox{ and }\\
&\begin{cases}
(a)\ \dstyle\GN^2\int_0^\infty\SG_sf(x) p_t(0,s) ds=\int_0^\infty \GN^2\SG_sf(x)p_t(0,s) ds; &\\
(b)\ \dstyle\GN^2\int_0^t\int_0^\infty \SG_s f(x)p_r(0,s) dsdr=\int_0^t\int_0^\infty \GN^2\SG_s f(x)p_r(0,s) dsdr; &
\end{cases}
\end{split}
\tag{P}
\label{P}
\end{equation}
where $p_t(0,s)$ is the transition density of the Brownian motion $B(t)$ and $D_{ij}f$ is $\partial^2/\partial x_i \partial x_j$.  Property \eqref{P} is satisfied
when $\Xx$ is a Brownian motion  (see  \lemref{diffunderint} below).

The first theorem gives us the fourth order PDE solved by running a Brownian-time process and then averaging the sum of $f(\BTP)$ and the integral of a
function $g$ along the path of $\BTP$.
 \begin{thm}\label{BTPPDES1}
Let $\SG_sf(x)=\EP f(\Xx(s))$ be the semigroup of the continuous Markov process $\Xx(t)$ and
$\GN$ its generator.  Let $f$ and $g$ be bounded continuous functions in ${\mathbb D}(\GN)$, the domain of $\GN$, such that $D_{ij} f$ and $D_{ij} g$ are
bounded and H\"older continuous with exponent $0<\alpha\le1$, for all $1\le i,j\le d$.
If
\begin{equation}
u(t,x)=\EP\left[f(\kEBTP)+\int_0^tg(\kEBTPr)dr\right]
\label{sol1}
\end{equation}
for any $k\in\N$ $($as stated before $\oEBTP=\BTP$\/$)$, or if we replace $\kEBTP$ with $\EBTP$ in \eqref{sol1}, then $u$ solves the PDE
\begin{equation}
\begin{cases}
 \dfrac{\partial}{\partial t} u(t,x)=\dfrac{\GN f(x)}{\sqrt{2\pi t}}+\dfrac{\sqrt{2 t}}{\pi}\GN g(x)+\dfrac12\GN^2\utx;
 &t>0,\,x\in\Rd,
\cr u(0,x)=f(x)={\dstyle\lim_{\substack{t\downarrow0\\y\to x}}} u(t,y); &x\in\Rd,
\end{cases}
\label{BTPPDE}
\end{equation}
where the operator $\GN$ acts on $u(t,x)$ as a function of $x$ with $t$ fixed.
In particular, if $\BTP$ is a BTBM and
$\Delta$ is the standard Laplacian, then $u$ solves
\begin{equation}
\begin{cases}
  \dfrac{\partial}{\partial t}u(t,x)=\dfrac{\Delta f(x)}{\sqrt{8\pi t}}+\dfrac{\sqrt{2 t}}{2\pi}\Delta g(x)+\dfrac18\Delta^2u(t,x);& t>0,\,x\in\Rd,
\cr u(0,x)=f(x)={\dstyle\lim_{\substack{t\downarrow0\\y\to x}}} u(t,y); & x\in\Rd.
\end{cases}
\label{BTBMPDE}
\end{equation}
\end{thm}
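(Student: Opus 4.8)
\smallskip
\noindent\textbf{Proof strategy.}
The plan is to collapse the excursion structure by a marginals--and--Tonelli argument, condition on the Brownian clock, and then differentiate the resulting Gaussian integral, using \eqref{P} to move $\GN^{2}$ through the integrals. For the collapse: the excursion rearrangements defining $\kEBTP$ (and $\EBTP$) change the joint law of $r\mapsto\kEBTPr$ but not the one--dimensional marginal at any fixed time $r$, which for every $k$ equals $\P(\Xx(|B(r)|)\in\cdot)$ --- exactly the point recorded in (a)--(c) of the Introduction. Since $g$ is bounded, Tonelli gives $\EP\int_{0}^{t}g(\kEBTPr)\,dr=\int_{0}^{t}\EP g(\Xx(|B(r)|))\,dr$ and $\EP f(\kEBTP)=\EP f(\Xx(|B(t)|))$, and likewise with $\EBTP$ in place of $\kEBTP$; so it suffices to treat $u(t,x)=\EP f(\Xx(|B(t)|))+\int_{0}^{t}\EP g(\Xx(|B(r)|))\,dr$. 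Conditioning on $B$, using independence together with $\SG_{s}f(x)=\EP f(\Xx(s))$ (and its analogue for $g$) and the fact that $|B(r)|$ has density $2p_{r}(0,s)$ on $(0,\infty)$, one gets
\[
u(t,x)=2\int_{0}^{\infty}\SG_{s}f(x)\,p_{t}(0,s)\,ds+2\int_{0}^{t}\!\!\int_{0}^{\infty}\SG_{s}g(x)\,p_{r}(0,s)\,ds\,dr .
\]

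Now differentiate in $t$. By the fundamental theorem of calculus the $t$--derivative of the second summand is $2\int_{0}^{\infty}\SG_{s}g(x)p_{t}(0,s)\,ds=\EP g(\Xx(|B(t)|))$. For the first summand, differentiate under the integral sign --- legitimate precisely because of the H\"older hypothesis on $D_{ij}f$ and part (a) of \eqref{P} (for Brownian $\Xx$ this is \lemref{diffunderint}) --- use the heat equation $\partial_{t}p_{t}(0,s)=\tfrac12\partial_{s}^{2}p_{t}(0,s)$, and integrate by parts twice in $s$ via $\partial_{s}\SG_{s}f=\GN\SG_{s}f$. The $s=\infty$ endpoint terms vanish by Gaussian decay against the bounded quantities $\SG_{s}f$ and $\GN\SG_{s}f=\SG_{s}\GN f$; the $s=0$ term from the first integration by parts vanishes since $\partial_{s}p_{t}(0,s)|_{s=0}=0$; and the $s=0$ term from the second integration by parts is $\GN f(x)\,p_{t}(0,0)=\GN f(x)/\sqrt{2\pi t}$, using $\SG_{0}f=f$. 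The surviving volume term $\int_{0}^{\infty}\GN^{2}\SG_{s}f(x)p_{t}(0,s)\,ds$ equals $\tfrac12\GN^{2}\,\EP f(\Xx(|B(t)|))$ by part (a) of \eqref{P}.

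Running that same double integration by parts with $g$ in place of $f$ gives, for each $r>0$,
\[
\partial_{r}\Bigl[2\int_{0}^{\infty}\SG_{s}g(x)\,p_{r}(0,s)\,ds\Bigr]=\frac{\GN g(x)}{\sqrt{2\pi r}}+\frac12\,\GN^{2}\,\EP g(\Xx(|B(r)|)).
\]
Integrating over $r\in(0,t)$ --- using $\lim_{r\downarrow0}\EP g(\Xx(|B(r)|))=g(x)$ (continuity of $\Xx$, $g$ bounded continuous, dominated convergence), the elementary integral $\int_{0}^{t}(2\pi r)^{-1/2}\,dr$, and part (b) of \eqref{P} to pull $\GN^{2}$ out of $\int_{0}^{t}\!\int_{0}^{\infty}$ --- re-expresses $\EP g(\Xx(|B(t)|))$ through the $g$--source of \eqref{BTPPDE} and $\tfrac12\GN^{2}$ applied to the $g$--part of $u$. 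Adding the two contributions gives the evolution equation in \eqref{BTPPDE}; the initial condition $u(0,x)=f(x)=\lim_{t\downarrow0,\,y\to x}u(t,y)$ follows from $\SG_{0}=\mathrm{id}$, the continuity properties of the semigroup, and dominated convergence (the $g$--integral being $O(t)$). The Laplacian statement \eqref{BTBMPDE} is the specialization $\GN=\tfrac12\Delta$, $\GN^{2}=\tfrac14\Delta^{2}$.

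I expect the main obstacle to be making these analytic steps rigorous: the differentiation under the integral sign and the two interchanges of $\GN^{2}$ with the iterated integrals are exactly what hypothesis \eqref{P}, together with the H\"older continuity of $D_{ij}f$ and $D_{ij}g$, is engineered to license; and nailing the precise coefficients in \eqref{BTPPDE} then comes down to careful bookkeeping of the four boundary terms in the double integration by parts and of the limit $\lim_{r\downarrow0}\EP g(\Xx(|B(r)|))=g(x)$.
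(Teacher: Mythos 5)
Up to the treatment of the $g$-term, your route is the paper's proof: the reduction to the BTP case via one-dimensional marginals (the paper defers this to the argument in \cite{AZ}, but it is the same point), the representation $u(t,x)=2\int_0^\infty\SG_sf(x)\,p_t(0,s)\,ds+2\int_0^t\!\int_0^\infty\SG_sg(x)\,p_r(0,s)\,ds\,dr$, and the $f$-part computation (heat equation for $p_t(0,s)$, two integrations by parts with the boundary term $\GN f(x)p_t(0,0)$, and \eqref{P}(a) to pull out $\GN^2$) all coincide with the paper. The genuine divergence, and the gap, is in how you assemble the $g$-term.

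Set $w(r,x)\eqdef\EP g(\BTPr)=2\int_0^\infty\SG_sg(x)p_r(0,s)\,ds$. Your plan is: by the fundamental theorem of calculus $\partial_t$ of the $g$-part of $u$ equals $w(t,x)$, and then $w(t,x)$ is re-expressed by integrating $\partial_rw=\GN g(x)/\sqrt{2\pi r}+\tfrac12\GN^2w$ over $(0,t)$. But that integration, with the boundary value $\lim_{r\downarrow0}w(r,x)=g(x)$ that you yourself invoke, gives
\[
w(t,x)=g(x)+\sqrt{\tfrac{2t}{\pi}}\,\GN g(x)+\tfrac12\,\GN^2\!\int_0^tw(r,x)\,dr,
\]
so ``adding the two contributions'' yields $\partial_tu=\GN f(x)/\sqrt{2\pi t}+g(x)+\sqrt{2t/\pi}\,\GN g(x)+\tfrac12\GN^2u$, which is not the right-hand side of \eqref{BTPPDE}: the $g(x)$ your derivation produces has nothing to cancel against and cannot simply be dropped. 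Indeed, for $f\equiv0$, $g\equiv1$ and Brownian outer process, $u(t,x)=t$ and $w\equiv1$, while $\sqrt{2t/\pi}\,\GN g+\tfrac12\GN^2(\cdot)=0$; so the claimed re-expression of $\EP g(\BTP)$ without the $g(x)$ is false. The paper's proof turns differently at exactly this point: in \eqref{tder} it does not apply the fundamental theorem of calculus but keeps $2\int_0^\infty\SG_sg(x)\bigl(\int_0^t\partial_rp_r(0,s)\,dr\bigr)ds$, inserts the heat equation, interchanges the $r$- and $s$-integrations, and only then integrates by parts in $s$ at fixed $r$, which produces no $g(x)$. Your route is not an equivalent bookkeeping of that step: $\int_0^\infty|\partial_s^2p_r(0,s)|\,ds$ is of order $1/r$, so that interchange is not an absolutely convergent Fubini application, and the $g\equiv1$ computation shows the two orders of integration genuinely differ by $g(x)$ ($1$ versus $0$). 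As written, therefore, your proposal does not arrive at \eqref{BTPPDE}: you must either retain the $g(x)$ source term and confront the resulting discrepancy with the displayed PDE and with the interchange in \eqref{tder}, or justify that interchange directly---and the test case shows it cannot be justified as an identity. A smaller instance of the same need for care: your own elementary integral is $\int_0^t(2\pi r)^{-1/2}dr=\sqrt{2t}/\sqrt{\pi}$, so your bookkeeping produces the coefficient $\sqrt{2t/\pi}$ for $\GN g(x)$, not the $\sqrt{2t}/\pi$ appearing in \eqref{BTPPDE}; state explicitly which constant your computation yields.
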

\begin{rem}
The inclusion of the initial function $f(x)$ in the PDEs  \eqref{BTPPDE}  and \eqref{BTBMPDE} is a reflection of the non-Markovian property of our BTP.
Thus, as mentioned above, the role of $f$ here is fundamentally different from its role in the standard Markov-PDE connection.  Moreover, as $t$ gets large, we see that the effect
of the initial function $f$, through $\GN f$, fades away at the rate $1/\sqrt{2\pi t}$; while the effect of $g$, through $\GN g$, becomes more dominant at a
rate $\sqrt{2 t}/ \pi$.    We also remark that property \eqnref{P} $($excluding part $(b)\/)$ should have been explicitly assumed for the case of general outer Markov process $\Xx$
in Theorem 0.1 in \cite{AZ}.
\end{rem}
Next, we solve the PDE obtained by running an $\epsilon$-scaled BTP and averaging the product of $f(\eBTP)$ with the negative exponential of
$|B(t)|/\epsilon$ (the Brownian clock speeded up by $1/\epsilon$).    When $\epsilon=1$ this is a special case of the Feynman-Kac formula for BTP
given by \eqref{udefn1} in \thmref{FK}.   However, it deserves to be singled out, for it is a first step towards the probabilistic study of linearized
Cahn-Hilliard and Kuramoto-Sivashinsky type PDEs, which we undertake in an upcoming paper \cite{AKSCH} (see also \remref{CHrem} below).
\begin{thm}\label{PDE2th}
Under the same conditions on $f$ as in \thmref{BTPPDES1}, and for $\epsilon> 0$, if
\begin{equation}
\utxe\eqdef \EP\left[f(\kEeBTP)\exp\left(-{\frac{|B(t)|}{\epsilon}}\right)\right]
\label{sol2}
\end{equation}
for any $k\in\N$, or if we replace $\kEeBTP$ with $\EeBTP$ in \eqref{sol2}, then $u$ solves
\begin{equation}
\begin{cases}
 \dfrac{\partial}{\partial t} \utxe=\dfrac{1}{\sqrt{2\pi t}}\left[\epsilon\,\GN f(x)-\dfrac{1}{\epsilon}f(x)\right]\\
 \hspace{1.7cm}+ \dfrac{1}{2\epsilon^2}\utxe-\GN \utxe + \dfrac{\epsilon^2}{2}\GN^2\utxe;&{ t>0,\,x\in\Rd,}
\cr u_\epsilon(0,x)=f(x)={\dstyle\lim_{\substack{t\downarrow0\\y\to x}}} u_\epsilon(t,y); &x\in\Rd.
\end{cases}
\label{PDE2}
\end{equation}
In particular, if the outer Markov process $\Xx$ in \eqref{sol2} is a Brownian motion then $\utxe$ solves
\begin{equation}
\begin{cases}
 \dfrac{\partial}{\partial t} \utxe=\dfrac{1}{\sqrt{2\pi t}}\left[\dfrac{\epsilon}{2}\,\Delta f(x)-\dfrac{1}{\epsilon}f(x)\right]
 \\ \hspace{1.7cm} + \dfrac{1}{2\epsilon^2}\utxe -\dfrac12\Delta \utxe+ \dfrac{\epsilon^2}{8}\Delta^2\utxe;
 &t>0,\,x\in\Rd,
\cr u_\epsilon(0,x)=f(x)={\dstyle\lim_{\substack{t\downarrow0\\y\to x}}} u_\epsilon(t,y); &x\in\Rd.
\end{cases}
\label{BMPDE2}
\end{equation}
\end{thm}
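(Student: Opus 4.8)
The plan is to mimic the proof of \thmref{BTPPDES1}: reduce $\utxe$ to a single deterministic integral against the one–dimensional Brownian density, then differentiate in $t$ and integrate by parts twice in the Brownian clock variable.

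\smallskip
\noindent\textbf{Reduction.} First I would condition on the $\sigma$-field generated by the whole Brownian path $B$. The weight $\exp(-|B(t)|/\epsilon)$ comes out, and — since the outer copies are independent of $B$ and all start at $x$, so whichever copy is active at time $t$ has been run for clock-time $\epsilon|B(t)|$ from $x$ — one gets, for every $k\in\N$ and for $\EeBTP$ alike,
\[
\EP\bigl[f(\kEeBTP)\bigm|\sigma(B)\bigr]=\SG_{\epsilon|B(t)|}f(x).
\]
(This is the mechanism behind items (a)--(c) of the Introduction.) Taking expectations, and using that $|B(t)|$ has density $2p_t(0,s)$ on $(0,\infty)$, yields the representation
\[
\utxe=\EP\bigl[\SG_{\epsilon|B(t)|}f(x)\,e^{-|B(t)|/\epsilon}\bigr]=2\int_0^\infty e^{-s/\epsilon}\SG_{\epsilon s}f(x)\,p_t(0,s)\,ds,
\]
which in particular does not depend on $k$ nor on the choice of $\kEeBTP$ versus $\EeBTP$.

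\smallskip
\noindent\textbf{The PDE.} Put $\phi(s)=e^{-s/\epsilon}\SG_{\epsilon s}f(x)$. The product rule together with $\frac{d}{ds}\SG_{\epsilon s}f=\epsilon\GN\SG_{\epsilon s}f$ gives $\phi'(s)=e^{-s/\epsilon}(\epsilon\GN-\tfrac1\epsilon)\SG_{\epsilon s}f(x)$ and $\phi''(s)=e^{-s/\epsilon}(\epsilon\GN-\tfrac1\epsilon)^2\SG_{\epsilon s}f(x)$, so $\phi'(0)=\epsilon\GN f(x)-\tfrac1\epsilon f(x)$. Differentiating the representation in $t$ under the integral sign, using $\partial_t p_t(0,s)=\tfrac12\partial_s^2 p_t(0,s)$, and integrating by parts twice in $s$ — the boundary terms at $s=\infty$ vanishing by the Gaussian decay of $p_t$ and $\partial_s p_t$, and those at $s=0$ collapsing (because $\partial_s p_t(0,s)|_{s=0}=0$ and $p_t(0,0)=1/\sqrt{2\pi t}$) to $\phi'(0)/\sqrt{2\pi t}$ — gives
\[
\partial_t\utxe=\frac{1}{\sqrt{2\pi t}}\Bigl(\epsilon\,\GN f(x)-\tfrac1\epsilon f(x)\Bigr)+\int_0^\infty e^{-s/\epsilon}\Bigl(\epsilon\GN-\tfrac1\epsilon\Bigr)^2\SG_{\epsilon s}f(x)\,p_t(0,s)\,ds.
\]
Since $(\epsilon\GN-\tfrac1\epsilon)^2=\epsilon^2\GN^2-2\GN+\tfrac1{\epsilon^2}$, property \eqnref{P} (together with the weaker, analogous interchange for the first power of $\GN$) lets me pull this operator outside the integral; recognizing $2\int_0^\infty e^{-s/\epsilon}\SG_{\epsilon s}f(x)p_t(0,s)\,ds=\utxe$ turns the last term into $\tfrac{\epsilon^2}{2}\GN^2\utxe-\GN\utxe+\tfrac1{2\epsilon^2}\utxe$, which is exactly \eqref{PDE2}. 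Specializing $\GN=\tfrac12\Delta$ (so $\epsilon\GN f=\tfrac\epsilon2\Delta f$, $\GN\utxe=\tfrac12\Delta\utxe$, $\tfrac{\epsilon^2}{2}\GN^2\utxe=\tfrac{\epsilon^2}{8}\Delta^2\utxe$), with property \eqnref{P} then supplied by \lemref{diffunderint}, yields \eqref{BMPDE2}.

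\smallskip
\noindent\textbf{Initial condition and main obstacle.} For the initial data, $2\int_0^\infty p_t(0,s)\,ds=1$ gives $u_\epsilon(t,y)-f(x)=2\int_0^\infty[e^{-s/\epsilon}\SG_{\epsilon s}f(y)-f(x)]p_t(0,s)\,ds$; bounding the bracket by $\|\SG_{\epsilon s}f-f\|_\infty+\tfrac s\epsilon\|f\|_\infty+|f(y)-f(x)|$ and using $\|\SG_r f-f\|_\infty\le r\|\GN f\|_\infty$ (valid for $f\in{\mathbb D}(\GN)$ since $\SG_r$ is a sup-norm contraction), $2\int_0^\infty s\,p_t(0,s)\,ds=\sqrt{2t/\pi}\to0$, and the continuity of $f$, one obtains $\lim_{t\downarrow0,\,y\to x}u_\epsilon(t,y)=f(x)$. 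The genuinely delicate point, exactly as in \thmref{BTPPDES1}, is the analytic justification: differentiating under the integral in $t$, carrying out the two integrations by parts with the correct behaviour of $\SG_{\epsilon s}f$, $\GN\SG_{\epsilon s}f$ and $\GN^2\SG_{\epsilon s}f$ near $s=0$ and as $s\to\infty$, and interchanging $\GN$ and $\GN^2$ with $\int_0^\infty(\cdot)\,p_t(0,s)\,ds$ — this is precisely what the H\"older hypothesis on the $D_{ij}f$ and property \eqnref{P} are there to supply. The additional weight $e^{-s/\epsilon}\le1$ only helps integrability and, through $\phi'(0)$ and $\phi''$, is what manufactures the new lower-order terms $\tfrac1{2\epsilon^2}\utxe-\GN\utxe$ and the $-\tfrac1\epsilon f(x)$ in the source.
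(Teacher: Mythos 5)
Your proposal is correct and follows essentially the same route as the paper: reduce $\utxe$ to $2\int_0^\infty e^{-s/\epsilon}\SG_{\epsilon s}f(x)\,p_t(0,s)\,ds$ (the paper's $v_\epsilon(s,x)=e^{-s/\epsilon}\SG_{\epsilon s}f(x)$), differentiate in $t$ under the integral, use the heat equation for $p_t(0,s)$ and two integrations by parts with the boundary facts $\partial_s p_t(0,s)|_{s=0}=0$, $p_t(0,0)=1/\sqrt{2\pi t}$, then pull $\GN$ and $\GN^2$ out via property \eqnref{P} (supplied by \lemref{diffunderint} in the BTBM case). Your handling of the excursion versions by conditioning on $\sigma(B)$ and of the initial condition via the contraction estimate $\|\SG_r f-f\|_\infty\le r\|\GN f\|_\infty$ are only cosmetic variants of the paper's reduction to the BTP case (cited from \cite{AZ}) and its bounded-convergence argument.
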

\begin{rem}
This time the initial function $f$ affects our PDE through both $f$ and $\GN f;$ and, as before, these effects diminish as $t$ grows larger at the rate $1/\sqrt{2\pi t}$.
Also, for small $\epsilon$,
we see that the effects $f$ and $u_\epsilon$ are larger and eventually, as $\epsilon\searrow0$, $u_\epsilon$ dominates all other terms in the PDE.  We also
comment briefly that, although a certainly different PDE, the last two terms in \eqref{BMPDE2}
$($the bi-Laplacian and the Laplacian of the solution $u_\epsilon$\/$)$ look like those in a linearized Cahn-Hilliard equation with the
correct $\epsilon$-scaling, albeit with the opposite sign for $\Delta^2$.
\label{CHrem}
\end{rem}
The next result gives a Feynman-Kac type formula for BTP's and connect it to fourth order PDEs:
\begin{thm}\label{FK}
Assume that $f,c:\Rd\to\R$ are bounded, $c\le0$, and $D_{ij}f$ and $D_{ij}c$ are bounded and H\"older continuous with exponent $0<\alpha\le1$, for all
$1\le i,j\le d$.  If the $|D_{ij} v(s,x)|\le K_T$ $\forall (s,x)\in[0,T]\times\Rd$, for any time $T>0$, for all $i,j$, where $K_T>0$ is a constant depending only
on $T$ and
\begin{equation}
\vsx\eqdef\EP\left[f(\Xx(s))\exp\left(\int_0^{s}c(\Xx(r))dr\right)\right],
\label{vdefn1}
\end{equation}
and where $\Xx$ is a $d$-dimensional Brownian motion starting at $x$ under $\P$.  Then,
\begin{equation}
\utx\eqdef\EP\left[f(\BTP)\exp\left(\int_0^{|B(t)|}c(\Xx(r))dr\right)\right]
\label{udefn1}
\end{equation}
solves
\begin{equation}
\begin{cases}
 \dfrac{\partial}{\partial t} \utx=\dfrac{1}{\sqrt{2\pi t}}\left[\dfrac{1}{2}\Delta f(x)+c(x)f(x)\right]\vspace{1mm}
 \\ \hspace{1.7cm} + \left[\dfrac14\Delta c(x) +\dfrac12c^2(x)\right]\utx +\dfrac12\nabla c(x)\cdot\nabla\utx\vspace{1mm}
 \\ \hspace{1.7cm} +\dfrac12c(x)\Delta\utx+ \dfrac{1}{8}\Delta^2\utx;
 &t>0,\,x\in\Rd,\vspace{1mm}
\cr u_\epsilon(0,x)=f(x)={\dstyle\lim_{\substack{t\downarrow0\\y\to x}}} u_\epsilon(t,y); &x\in\Rd.
\end{cases}
\label{BTPFKPDE}
\end{equation}
\end{thm}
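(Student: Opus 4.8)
The plan is to collapse the fourth--order PDE \eqref{BTPFKPDE} onto the classical second--order Feynman--Kac PDE for the inner average $\vsx$, exactly in the spirit of the proof of \thmref{BTPPDES1}. Since $\Xx$ is a $d$--dimensional Brownian motion independent of $B$, conditioning on $B$ and using \eqref{vdefn1} gives the representation $\utx=\EP\bigl[v(\RBM,x)\bigr]=\int_0^\infty\vsx\,2p_t(0,s)\,ds$, the density of $\RBM$ being the half--normal $2p_t(0,s)$ on $(0,\infty)$; in particular $\int_0^\infty\vsx\,p_t(0,s)\,ds=\tfrac12\utx$. By the classical Feynman--Kac theorem the hypotheses on $f$ and $c$ ($c\le0$, both bounded, $D_{ij}f$ and $D_{ij}c$ bounded and H\"older) make $v$ a bounded classical solution of
\begin{equation*}
\frac{\partial}{\partial s}\vsx=\tfrac12\Delta\vsx+c(x)\vsx,\qquad v(0,x)=f(x),
\end{equation*}
and, differentiating this once more in $s$ and resubstituting, of
\begin{equation*}
\frac{\partial^2}{\partial s^2}\vsx=\tfrac14\Delta^2\vsx+\tfrac12\Delta c(x)\,\vsx+\nabla c(x)\!\cdot\!\nabla\vsx+c(x)\Delta\vsx+c^2(x)\vsx.
\end{equation*}

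Next I would differentiate the representation in $t$, replace $\partial_tp_t(0,s)$ by $\tfrac12\partial_s^2p_t(0,s)$, and integrate by parts twice in $s$ over $(0,\infty)$. Since $\partial_sp_t(0,0)=0$ the first boundary term drops, and the terms at $s=\infty$ vanish by the Gaussian decay of $p_t(0,\cdot)$ together with the boundedness of $v$ and of $\partial_sv=\tfrac12\Delta v+cv$ (the standing bound $|D_{ij}\vsx|\le K_T$ enters here). The only surviving boundary term, from $s=0$ in the second integration by parts, is $\partial_sv(0^+,x)\,p_t(0,0)$; by continuity of $v$ and of its second spatial derivatives up to $s=0$, together with the Feynman--Kac equation, this equals $\bigl[\tfrac12\Delta f(x)+c(x)f(x)\bigr]/\sqrt{2\pi t}$, i.e. the first bracket of \eqref{BTPFKPDE}. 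The leftover is $\int_0^\infty\partial_s^2\vsx\,p_t(0,s)\,ds$, into which I substitute the second display above. Pulling $\Delta^2$, $\nabla$ and $\Delta$ out through $\int_0^\infty(\cdot)\,p_t(0,s)\,ds$---the analogue of property \eqref{P} for the Brownian outer process, justified as in \lemref{diffunderint} from $|D_{ij}v|\le K_T$ together with the companion parabolic--Schauder bounds on $\nabla v$, $\Delta v$ and $\Delta^2v$ on $[0,T]\times\Rd$---and then using $\int_0^\infty\vsx\,p_t(0,s)\,ds=\tfrac12\utx$ to replace each $\int_0^\infty(\,\cdot\,v)\,p_t\,ds$ by that operator applied to $\tfrac12\utx$, I obtain
\begin{equation*}
\tfrac14\Delta^2\!\left(\tfrac12u\right)+\tfrac12\Delta c\!\left(\tfrac12u\right)+\nabla c\!\cdot\!\nabla\!\left(\tfrac12u\right)+c\,\Delta\!\left(\tfrac12u\right)+c^2\!\left(\tfrac12u\right)=\tfrac18\Delta^2u+\tfrac14\Delta c\,u+\tfrac12\nabla c\!\cdot\!\nabla u+\tfrac12c\,\Delta u+\tfrac12c^2u,
\end{equation*}
which together with the boundary term is exactly the right--hand side of \eqref{BTPFKPDE}. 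The initial condition $\utx\to f(x)$ as $t\downarrow0$, $y\to x$ follows from the joint continuity of $v$ at $(0,x)$ with $v(0,\cdot)=f$, boundedness, and dominated convergence since $\RBM\to0$.

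I expect the only real work to lie in the analytic bookkeeping of the middle step: justifying the $t$--differentiation under the integral, the two integrations by parts with their $s=\infty$ and $s=0$ boundary terms, and---most delicately---the interchange of $\Delta^2$ (and the lower--order spatial operators) with the $s$--integral against $p_t(0,s)$. This is precisely where the hypothesis $|D_{ij}\vsx|\le K_T$ and the parabolic regularity of $v$ (which upgrades $D_{ij}f,D_{ij}c\in C^\alpha$ to bounds on the spatial derivatives of $v$ up to fourth order) are needed, and it is why conditions of type \eqref{P} were isolated at the outset; once these interchanges are licensed, the PDE identity falls directly out of the two displayed computations.
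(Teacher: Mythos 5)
Your overall route is exactly the paper's: write $\utx=2\int_0^\infty \vsx\,p_t(0,s)\,ds$ by independence, differentiate in $t$, use $\partial_t p_t(0,s)=\tfrac12\partial_s^2p_t(0,s)$, integrate by parts twice (the only surviving boundary term being $p_t(0,0)\,\partial_s v(0^+,x)=\bigl[\tfrac12\Delta f(x)+c(x)f(x)\bigr]/\sqrt{2\pi t}$), substitute the once-differentiated Feynman--Kac equation $\partial_s^2v=\tfrac14\Delta^2v+\tfrac12\Delta c\,v+\nabla c\cdot\nabla v+c\Delta v+c^2v$, and pull the spatial operators through the $s$-integral to land on \eqref{BTPFKPDE}; your coefficient bookkeeping and the treatment of the initial condition are both right.

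The gap is in how you license the crucial interchange of $\Delta^2$ with $\int_0^\infty(\cdot)\,p_t(0,s)\,ds$. You invoke ``parabolic--Schauder bounds on $\nabla v$, $\Delta v$ and $\Delta^2 v$ on $[0,T]\times\Rd$,'' but no such uniform bound on $\Delta^2v$ up to $s=0$ is available under the hypotheses: $f$ only has bounded H\"older second derivatives, so (as the estimate \eqref{tstep} in \lemref{diffunderint} already shows in the pure-heat case) the fourth spatial derivatives should be expected to blow up like $s^{\alpha/2-1}$ as $s\downarrow0$, and a crude domination argument does not close. The paper's actual work at this point is different and is what your sketch is missing: it writes $v$ via the Duhamel/Feynman--Kac integral equation $v(s,x)=\EP f(\Xx(s))+\int_0^s\EP\{c(\Xx(r))\,v(s-r,\Xx(r))\}\,dr$, uses the standing hypothesis $|D_{ij}v|\le K_T$ together with Durrett's regularity theorems to show that $g\eqdef cv$ has $D_{ij}g$ bounded and H\"older continuous in $x$ uniformly on $[0,T]\times\Rd$ (this is precisely where the $K_T$ assumption earns its keep, via the decomposition $D_{ij}v=v_1+v_2$ and the Lipschitz estimate on $v_2$), and only then applies \lemref{diffunderint} and \remref{DI} to each piece, whose integrable $s^{\alpha/2-1}$ singularity is exactly what makes the interchange legitimate. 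So your proposal is structurally the paper's proof, but the one step you flag as ``analytic bookkeeping'' is the substantive part of the argument, and the justification you propose for it would not go through as stated; it needs to be replaced by the integral-equation decomposition and the uniform H\"older control of $D_{ij}(cv)$.
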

\begin{rem}
As with previous PDEs, the effect of the initial function $f$---this time through $\frac{1}{2}\Delta f(x)+c(x)f(x)$---fades away as $t$ grows larger at the
BTP rate of $1/\sqrt{2\pi t}$.   Another
feature of the BTP Feynman-Kac PDE that is quite different from the standard Feynman-Kac PDE for Brownian motion is the existence
of the interaction term $c(x)f(x)$ between the initial function $f$ and the function $c$.  Also,
We suspect that the conditions on $f$ and $c$ in \thmref{FK} above are sufficient to imply the condition
$|D_{ij} v(s,x)|\le K_T$ $\forall (s,x)\in[0,T]\times\Rd$, for any time $T>0$, for all $i,j$, but we do not have a proof of this yet.
Finally, it is worth emphasizing that the BTP solutions to the PDEs presented in this article are all bounded.
\label{FKrem}
\end{rem}
 \section{Proofs of results}
\subsection{A Technical Lemma}
We start with a differentiating-under-the-integral type lemma
\begin{lem}
Let $\Xx$ be a $d$-dimensional Brownian motion starting at $x$ under $\P;$ and let  $f,g:\Rd\rightarrow\R$ be bounded and measurable such that $D_{ij} f$ and $D_{ij}g$ are H\"older continuous, with exponent $0<\alpha\le1$, for
$1\le i,j\le d$.  Let
\begin{equation}
\begin{split}
u_1(t,x)&\eqdef\int_0^\infty \EP f(\Xx(s)) p_t(0,s) ds\\
u_2(t,x)&\eqdef\int_0^t\int_0^\infty\EP g(\Xx(s))p_r(0,s) ds dr,
\end{split}
\label{diffunderint}
\end{equation}
then  $\Delta^2u_1(t,x)$ and $\Delta^2u_2(t,x)$ are finite and
\begin{equation}
\begin{split}
\Delta^2u_1(t,x)&=\int_0^\infty\Delta^2 \EP f(\Xx(s)) p_t(0,s) ds\\
\Delta^2u_2(t,x)&=\int_0^t\int_0^\infty\Delta^2\EP g(\Xx(s))p_r(0,s) ds dr.
\end{split}
\label{diffunderint1}
\end{equation}
If we additionally assume the $D_{ij}f$ and $D_{ij}g$ are bounded $($for all $i,j$\/$)$, then $\Delta^2u_1(t,x)$ and $\Delta^2u_2(t,x)$ are continuous
on $(0,\infty)\times\Rd$.
\end{lem}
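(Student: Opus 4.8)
The plan is to push everything through the heat semigroup $\SG_s f(x)=\EP f(\Xx(s))=\int_{\Rd}f(y)q_s(x,y)\,dy$, where $q_s(x,y)=(2\pi s)^{-d/2}\exp(-|x-y|^2/(2s))$, and then dominate and integrate against $p_t(0,s)\,ds$ (respectively $p_r(0,s)\,ds\,dr$). The single input everything rests on is the kernel estimate: if $h:\Rd\to\R$ has finite H\"older seminorm $[h]_\alpha=\sup_{x\ne y}|h(x)-h(y)|/|x-y|^\alpha$ and at most polynomial growth, then for every multi-index $\beta$ with $1\le|\beta|\le2$,
\begin{equation}
\sup_{x\in\Rd}\bigl|D^\beta_x\SG_s h(x)\bigr|\le C_{d,|\beta|}\,[h]_\alpha\,s^{(\alpha-|\beta|)/2},\qquad s>0.
\label{kernelest}
\end{equation}
This is obtained by differentiating $q_s(x,y)$ in $x$ under the integral, using the cancellation $\int_{\Rd}D^\beta_x q_s(x,y)\,dy=D^\beta_x 1=0$ to replace $h(y)$ by $h(y)-h(x)$, then bounding $|D^\beta_x q_s(x,y)|\le C\,s^{-|\beta|/2}\bigl(1+|x-y|^{|\beta|}s^{-|\beta|/2}\bigr)q_s(x,y)$ and $|h(y)-h(x)|\le[h]_\alpha|x-y|^\alpha$, and rescaling $y-x=\sqrt s\,z$ to pull out $s^{(\alpha-|\beta|)/2}$ times a Gaussian moment.

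Since $D_{ij}f$ is $\alpha$-H\"older, $\Delta f\in C^\alpha$ with finite global seminorm (hence at most polynomial growth); also $f$ is bounded and $\nabla f$ has at most polynomial growth, and the same for $g$. Because $f\in C^2$ with polynomially bounded derivatives, differentiation under the Gaussian integral gives $D^\beta_x\SG_s f=\SG_s(D^\beta f)$ for $|\beta|\le2$; applying this once more is not permitted (as $\Delta f$ is only H\"older), but $\Delta^2\SG_s f(x)=\Delta_x\bigl[\SG_s(\Delta f)(x)\bigr]=\int_{\Rd}\bigl(\Delta f(y)-\Delta f(x)\bigr)\Delta_x q_s(x,y)\,dy$, so \eqref{kernelest} with $h=\Delta f$ and $|\beta|=2$ yields the crucial a priori bound
\begin{equation}
\sup_{x\in\Rd}\bigl|\Delta^2\SG_s f(x)\bigr|\le C_d\,[\Delta f]_\alpha\,s^{\alpha/2-1},\qquad s>0,
\label{bilapest}
\end{equation}
and the analogous bound for $g$.

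Because $\alpha>0$, the exponent $\alpha/2-1>-1$, so $s\mapsto s^{\alpha/2-1}$ is integrable near $0$; as $p_t(0,s)$ is bounded near $s=0$ and has Gaussian decay as $s\to\infty$, \eqref{bilapest} gives $\int_0^\infty|\Delta^2\SG_s f(x)|\,p_t(0,s)\,ds<\infty$, which is the asserted finiteness of $\Delta^2u_1$; for $u_2$ one first carries out the $s$-integral, $\int_0^\infty s^{\alpha/2-1}p_r(0,s)\,ds\le C\,r^{\alpha/4-1/2}$, and then the $r$-integral, finite near $0$ since $\alpha/4-1/2>-1$. To get \eqref{diffunderint1} I would differentiate $u_1(t,x)=\int_0^\infty\SG_s f(x)\,p_t(0,s)\,ds$ twice in $x$ term by term: the first Laplacian is moved in using $|D^\beta_x\SG_s f(x)|=|\SG_s(D^\beta f)(x)|\le C(1+|x|^\alpha+s^{\alpha/2})$ for $|\beta|\le2$ together with the $s$-integrability of $p_t(0,\cdot)$, producing $\Delta u_1(t,x)=\int_0^\infty\SG_s(\Delta f)(x)\,p_t(0,s)\,ds$; the second Laplacian is moved in using \eqref{kernelest} with $h=\Delta f$ for $|\beta|=1,2$, whose bounds $s^{(\alpha-1)/2}$ and $s^{\alpha/2-1}$ are integrable against $p_t(0,\cdot)$ near $0$ (as $\alpha>0$) and uniform in $x$. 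This yields $\Delta^2u_1(t,x)=\int_0^\infty\Delta^2\SG_s f(x)\,p_t(0,s)\,ds$, and the same argument with the harmless extra $\int_0^t\!\cdot\,dr$ (whose bound is controlled exactly as in the finiteness step) handles $\Delta^2u_2$.

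For continuity on $(0,\infty)\times\Rd$ under the extra hypothesis that $D_{ij}f,D_{ij}g$ are bounded (so $\Delta f,\Delta g$ are bounded and $\alpha$-H\"older): for each fixed $s>0$ the map $(t,x)\mapsto\Delta^2\SG_s f(x)\,p_t(0,s)$ is continuous (the semigroup is smoothing in $x$, and $p_t(0,s)$ is smooth in $t>0$), and by \eqref{bilapest} it is dominated on $[t_0,t_1]\times\Rd$ by $C_d[\Delta f]_\alpha\,s^{\alpha/2-1}\sup_{t\in[t_0,t_1]}p_t(0,s)$, which is integrable over $s\in(0,\infty)$; dominated convergence gives joint continuity of $\Delta^2u_1$. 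For $\Delta^2u_2(t,x)=\int_0^t\bigl(\int_0^\infty\Delta^2\SG_s g(x)\,p_r(0,s)\,ds\bigr)dr$, the inner integral is continuous in $(r,x)$ by the same domination and is integrably bounded near $r=0$, so the outer $r$-integral is continuous in $(t,x)$ as well. I expect the only real difficulty to be \eqref{bilapest}: the second derivatives of $f$ are merely H\"older, so $\Delta^2\SG_s f$ genuinely blows up as $s\downarrow0$, and the point is that it blows up no faster than $s^{\alpha/2-1}$, which is borderline-integrable precisely because $\alpha>0$; extracting this rate requires the exact cancellation $\int D^\beta_x q_s(x,y)\,dy=0$ paired with the H\"older modulus of $\Delta f$, and everything afterward is routine dominated-convergence bookkeeping.
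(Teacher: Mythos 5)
Your proof is correct and follows essentially the same route as the paper: after moving two derivatives onto $f$ (so that only $\Delta f$, resp.\ $D_{ij}f$, appears), both arguments exploit the cancellation $\int_{\Rd}\Delta_x q_s(x,y)\,dy=0$ (in the paper, $\EP[(x_i-X_i^x(s))^2-s]=0$) together with the H\"older modulus of the second derivatives to obtain the key $s^{\alpha/2-1}$ bound, integrable against $p_t(0,s)$ since $\alpha>0$, and then finish by standard dominated-convergence/differentiation-under-the-integral arguments for the interchange, the $r$-integral in $u_2$, and the continuity. The only difference is cosmetic: you bound the kernel pointwise and rescale, while the paper works coordinatewise and uses Cauchy--Schwarz with Brownian scaling.
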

\begin{proof}
For notational simplicity we show that 
\begin{equation}
\begin{cases}
(a)\ \dstyle{\frac{\partial^4u_1}{\partial x_i^4}=\int_0^\infty \frac{\partial^4}{\partial x_i^4}\EP f(\Xx(s)) p_t(0,s) ds}; &i=1,\ldots,d,\\
(b)\ \dstyle{\frac{\partial^4u_2}{\partial x_i^4}=\int_0^t\int_0^\infty \frac{\partial^4}{\partial x_i^4}\EP g(\Xx(s)) p_r(0,s) dsdr}; &i=1,\ldots,d,
\end{cases}
\label{toshow}
\end{equation}
the mixed derivatives cases follow the same steps.
In the remainder of the proof, fix an arbitrary $i\in\{1,\ldots d\}$.  We start with assertion (a) in \eqref{toshow}.  Using the boundedness on $f$ and
Problem 3.1 p.~254 in \cite{KS} (the case $\Rd$ with $d>1$ is a simple extension when $f$ is bounded), the symmetry of  $p^{(d)}_s(x,y)$ (the density of $\Xx$) in $x$ and $y$, and the facts that
$$\lim_{y_i\to\pm\infty}f(y)\frac{\partial^3}{\partial y_i^3}p^{(d)}_s(x,y)=\lim_{y_i\to\pm\infty}\frac{\partial}{\partial y_i}f(y)\frac{\partial^2}{\partial y_i^2}p^{(d)}_s(x,y)=0,$$
(since $f$ is bounded and $\frac{\partial}{\partial y_i}f(y)$ is Lipschitz in $y_i$), we get
\begin{equation}
\begin{split}   \label{fstep}
&\frac{\partial^4}{\partial x_i^4}\EP f(\Xx(s)) p_t(0,s)=
\left(\int_{\Rd}f(y)\frac{\partial^4}{\partial x_i^4}p^{(d)}_s(x,y)dy\right)p_t(0,s)\\
&= \left(\int_{\Rd}f(y)\frac{\partial^4}{\partial y_i^4}p^{(d)}_s(x,y)dy\right)p_t(0,s)\\&=
\left(\int_{\Rd}\frac{\partial^2}{\partial y_i^2}f(y)\frac{\partial^2}{\partial y_i^2}p^{(d)}_s(x,y)dy\right)p_t(0,s).
\end{split}
\end{equation}
Rewriting the last term in \eqref{fstep}, and letting $h_i(y)\eqdef\partial^2 f(y)/\partial y_i^2$, we have
\begin{equation}
\begin{split}
&\frac{1}{\sqrt{2\pi t}}\left(\int_{\Rd}(2\pi s)^{-{d/2}}\left(\frac{(x_i-y_i)^2-s}{s^2}\right)e^{-|x-y|^2/2s}h_i(y)dy\right)e^{-s^2/2t}\\
&= \frac{1}{\sqrt{2\pi t}}e^{-s^2/2t}\EP\left[\left(\frac{(x_i-X_i^x(s))^2-s}{s^2}\right)h_i(\Xx(s))\right]\\
&= \frac{1}{\sqrt{2\pi t}} e^{-s^2/2t}\EP\left[\left(\frac{(x_i-X_i^x(s))^2-s}{s^2}\right)\left(h_i(\Xx(s))-h_i(x)\right)\right],
\end{split}
\label{sstep}
\end{equation}
where we used the fact that $\EP\left({(x_i-X_i^x(s))^2-s}\right)=0$ to obtain the last equality.   Now, using the Brownian motion scaling,
we have
\begin{equation}
\EP\left|(x_i-X_i^x(s))^2-s\right|^2=s^2\EP\left|\left(\frac{X_i^0(s)}{\sqrt{s}}\right)^2-1\right|^2=s^2\EP\left|\left({X_i^0(1)}\right)^2-1\right|^2=C s^2,
\label{Bsc}
\end{equation}
for some constant $C$, so an easy application of Cauchy-Schwarz inequality yields
\begin{equation}
\begin{split}
 &\left|\frac{\partial^4}{\partial x_i^4}\EP f(\Xx(s)) p_t(0,s)\right| \\
 &\le
 \frac{1}{\sqrt{2\pi t}} e^{-s^2/2t}\left(\EP\left|\frac{(x_i-X_i^x(s))^2-s}{s^2}\right|^2\EP\left|h_i(\Xx(s))-h_i(x)\right|^2\right)^{1/2} \\
 &\le \frac{K}{\sqrt{2\pi t}}\frac{e^{-s^2/2t}}{s}\left(\EP\left|\Xx(s)-x\right|^{2\alpha}\right)^{1/2}
 = \frac{K}{\sqrt{2\pi t}}\frac{e^{-s^2/2t}}{s^{1-{\alpha/2}}},
\end{split}
\label{tstep}
\end{equation}
where the next to last inequality follows from \eqref{Bsc} and the H\"{o}lder condition on $h_i$.  But
\begin{equation}
\frac{K}{\sqrt{2\pi t}}\int_0^\infty \frac{e^{-s^2/2t}}{s^{1-{\alpha/2}}} ds <\infty,
\label{absint}
\end{equation}
for $\alpha>0$.  So, \eqref{toshow} (a), as well as the continuity assertion afterwards, follow by a standard classical argument
(see e.g. Friedman (1964), pages 10-12 for the purely analytical details in the second order case).

Now, by part (a) of \eqref{toshow}, part (b) of \eqref{toshow} is established once we show
\begin{equation}
\frac{\partial^4}{\partial x_i^4}\int_0^t\int_0^\infty \EP g(\Xx(s)) p_r(0,s) dsdr=
\int_0^t\frac{\partial^4}{\partial x_i^4}\int_0^\infty \EP g(\Xx(s)) p_r(0,s) dsdr.
\label{toshow2}
\end{equation}
This is simple, however, since by the first part we have
\begin{equation}
\begin{split}
\left|\frac{\partial^4}{\partial x_i^4}\int_0^\infty\EP g(\Xx(s)) p_r(0,s)ds\right|&=
\left|\int_0^\infty\frac{\partial^4}{\partial x_i^4}\EP g(\Xx(s)) p_r(0,s)ds\right|\\
&\le\int_0^\infty\left|\frac{\partial^4}{\partial x_i^4}\EP g(\Xx(s)) p_r(0,s)\right| ds\\&\le
\frac{K}{\sqrt{2\pi r}}\int_0^\infty \frac{e^{-s^2/2r}}{s^{1-{\alpha/2}}} ds,
\end{split}
\label{finlem}
\end{equation}
and
$$\int_0^t \left(\frac{K}{\sqrt{2\pi r}}\int_0^\infty \frac{e^{-s^2/2r}}{s^{1-{\alpha/2}}} ds\right)dr<\infty.$$
So, again standard arguments (e.g., Problem 3 p.~52 in Ash (1972) or Friedman again) complete the proof.
\end{proof}
\begin{rem}
By an identical argument to that in \lemref{diffunderint} above, with only notational differences to accommodate the dependence on
time $r$ in $g(r,x)$, $\Delta^2$ can be pulled outside the integrals in  \eqref{ur} once $D_{ij} g$ is continuous on $[0,T]\times\Rd$ and
H\"older continuous in $x$ uniformly with respect to $(t,x)\in[0,T]\times\Rd$.
\label{DI}
\end{rem}
\subsection{The Main Proofs}
In all the proofs presented here it suffices to prove the result for the BTP case.  The excursion BTP (including the IBM) cases are proved from
the BTP one in exactly the same way as in the proof of Theorem 0.1 in \cite{AZ}.
\begin{proof}[Proof of \thmref{BTPPDES1}]
We first use the independence of $\Xx(\cdot)$ and $|B(\cdot)|$ to get
\begin{equation}
\begin{split}
\utx&=\EP\left[f(\BTP)+\int_0^tg(\BTPr)dr\right]\\
&= 2 \int_0^\infty \SG_s f(x) p_t(0,s) ds +  2 \int_0^\infty\int_0^t\SG_s g(x)p_r(0,s) dr ds
\end{split}
\label{expc}
\end{equation}
where $p_t(0,s)$ is the transition density of $B(t)$, and where we used the boundedness of $g$ and consequently that of $\SG_s g(x)$ along with Fubini's theorem to get the last term.
Differentiating $\eqref{expc}$ with respect to $t$ and putting the derivative under the integral,
which is easily justified by the dominated convergence theorem (remember that $f$ and $g$ are bounded), then using the fact that $p_t(0,s)$ satisfies the heat equation
$$\frac{\partial}{\partial t} p_t(0,s)=\frac12 \frac{\partial^2}{\partial s^2}p_t(0,s)$$ we have
\begin{equation}
\begin{split}
\frac{\partial}{\partial t} \utx & = 2\int_0^\infty \SG_s f(x) \frac{\partial}{\partial t}p_t(0,s) ds +2 \int_0^\infty\SG_s g(x)\left(\int_0^t \frac{\partial}{\partial r}p_r(0,s) dr\right)ds\\
&= \int_0^\infty\SG_s f(x)\frac{\partial^2}{\partial s^2} p_t(0,s) ds + \int_0^t \int_0^\infty\SG_s g(x)\frac{\partial^2}{\partial s^2}p_r(0,s) dsdr
\end{split}
\label{tder}
\end{equation}
 So, integrating by parts twice and observing that the boundary terms always vanish at $\infty$ (as $s\nearrow\infty$) and that
$\dstyle{(\partial/\partial s) p_t (0,s)}|_{ s=0}=0$ but $p_t(0,0) >0$, we get
\begin{equation*}
\begin{split}
\frac{\partial}{\partial t} \utx&= p_t (0,0)  \left.\left( {\frac {\partial }{\partial s}}\SG_s f(x)\right)\right|_{ s=0}  +
\int_0^t p_r (0,0) \left.\left( {\frac {\partial }{\partial s}}\SG_s g(x)\right)\right|_{ s=0}dr\\&  +
\int_0^\infty \GN^2\SG_s f(x)p_t (0,s)ds+\int_0^t \int_0^\infty\GN^2\SG_s g(x)p_r(0,s) dsdr\\
&=\frac{1}{\sqrt{2\pi t}}\GN f(x)+\frac{\sqrt{2 t}}{\pi}\GN g(x)\\&+\GN^2\left(\int_0^\infty \SG_s f(x)p_t (0,s)ds+
\int_0^t \int_0^\infty\SG_s g(x)p_r(0,s) dsdr\right)\\
&= \frac{1}{\sqrt{2\pi t}}\GN f(x)+\frac{\sqrt{2 t}}{\pi}\GN g(x)+\frac12\GN^2\utx,
\end{split}
\end{equation*}
where in the next to last step we used property \eqref{P}.  Obviously, $u(0,x)=f(x)$.  Also, rewriting $u$ as $\utx=\EP f(x+\BTPz)$
and noticing that $f$ is bounded and continuous, we can use the bounded convergence theorem to conclude
$$f(x)={\dstyle\lim_{\substack{t\downarrow0\\y\to x}}} u(t,y).$$
\end{proof}
Next, we present the
\begin{proof}[Proof of \thmref{PDE2th}]
Again, it's enough to prove the BTP case.  Let
\begin{equation}
\utxe\eqdef\EP\left[f(\eBTP)\exp\left(-{\frac{|B(t)|}{\epsilon}}\right)\right],
\label{exp}
\end{equation}
and
\begin{equation}
\vsxe\eqdef\EP\left[f(\Xx(\epsilon s))\exp\left(-{\frac{s}{\epsilon}}\right)\right]=\exp\left(-{\frac{s}{\epsilon}}\right)\SG_{\epsilon s}f(x).
\label{complexv1}
\end{equation}
We then have
\begin{equation}
\utxe=2\int_0^\infty\vsxe p_t(0,s)ds,
\label{uep}
\end{equation}
and so, following our argument in the previous proof; and noticing that, for a fixed $\epsilon$, $\exp\left(-{{s}/{\epsilon}}\right)$ and all of its derivatives are bounded
and in $C^\infty(\Rp;\Rp)$ we get,
\begin{equation}
\begin{split}
\frac{\partial}{\partial t} \utxe & = 2\int_0^\infty \vsxe\frac{\partial}{\partial t}p_t(0,s) ds= \int_0^\infty\vsxe\frac{\partial^2}{\partial s^2} p_t(0,s) ds\\
&=p_t (0,0)  \left.\left( {\frac {\partial }{\partial s}}\vsxe\right)\right|_{ s=0} +\int_0^\infty p_t(0,s)\frac{\partial^2}{\partial s^2} \vsxe  ds\\
&=\frac{1}{\sqrt{2\pi t}}\left[\epsilon\,\GN f(x)-\frac{1}{\epsilon}f(x)\right] + \frac{1}{2\epsilon^2}\utxe-\GN \utxe+\frac{\epsilon^2}{2}\GN^2\utxe,
\end{split}
\label{tder77}
\end{equation}
where we have also used property \eqref{P}.  Again, $u_\epsilon(0,x)=f(x)$.  Rewriting $u$ as $\utx=\EP\left[f(x+\BTPez)\exp\left(-{{|B(t)|}/{\epsilon}}\right)\right]$ and noticing that $f$ is bounded and continuous, we can use the bounded
convergence theorem to conclude
$$f(x)={\dstyle\lim_{\substack{t\downarrow0\\y\to x}}} u_\epsilon(t,y).$$
\end{proof}
We are now in a position to give the
\begin{proof}[Proof of \thmref{FK}]
Let $u$ and $v$ be defined as in \eqref{udefn1} and \eqref{vdefn1}, respectively.  Then, as we did several times above
\begin{equation}
\utx= 2 \int_0^\infty p_t(0,s)\vsx ds
\label{expc2}
\end{equation}
Differentiating $\eqref{expc2}$ with respect to $t$ and putting the derivative under the integral,
which is again justified by the dominated convergence theorem (remember that $f$ is bounded and $c\le 0$), then proceeding as in the proof of \thmref{BTPPDES1}
and note that, since $\Xx$ is Brownian motion starting at $x$ under $\P$, then
 $$\frac{\partial}{\partial s}\vsx=\frac12\Delta\vsx+c(x)\vsx,\mbox{ in }(0,\infty)\times\Rd,$$
 with $v$ continuous on $[0,\infty)\times\Rd$ and $v(0,x)=f(x)$ (see for example section 4.3 of \cite{D}).
 So, integrating by parts twice; and again observing that the boundary terms always vanish at $\infty$ (as $s\nearrow\infty$) and that
$\dstyle{(\partial/\partial s) p_t (0,s)}|_{ s=0}=0$ but $p_t(0,0) >0$, we get
\begin{equation}
\begin{split}
\frac{\partial}{\partial t} \utx&= - \int_0^\infty\frac{\partial}{\partial s} p_t (0,s) \frac{\partial}{\partial s} \vsx ds\\
&= p_t (0,0)  \left.\left( {\frac {\partial }{\partial s}}v \left( s,x \right) \right)\right|_{ s=0} + \int_0^\infty p_t (0,s) \frac{\partial^2}{\partial s^2}\vsx ds\\
&=p_t (0,0) \left(\frac12\Delta f(x)+c(x)f(x)\right)\\&+ \int_0^\infty p_t (0,s) \left(\frac14\Delta^2 v \left( s,x \right)
+\frac12 v \left( s,x \right)\Delta c \left( x \right)   \right) ds\\
 &+\int_0^\infty p_t (0,s) \left(\nabla c \left( x \right)\cdot \nabla v \left( s,x \right) +
c \left( x \right) \Delta v \left( s,x \right) +  c^{2} \left( x \right)   v \left( s,x \right) \right)ds
\end{split}
\label{FKGPDE}
\end{equation}
Taking the application of $\Delta$, $\nabla$, $\Delta^2$ as well as the terms $c$, $c^2$, $\Delta c$, $\nabla c$ and the dot product outside the integral;  we
get the PDE in \eqref{BTPPDE}.  To justify this last step, it suffices to show that we can take the highest order derivatives ($\Delta^2$) outside the integral.
Towards this end, we first note that
\begin{equation}
v(s,x)=\EP f(\Xx(s))+\int_0^s\EP \left\{c(\Xx(r))v(s-r,\Xx(r))\right\}dr.
\label{vr}
\end{equation}
This follows from exactly the same steps as those in Durrett's \cite{D} p.~140--141.   Then,
\begin{equation}
D_{ij} v(s,x)=D_{ij} \EP f(\Xx(s))+D_{ij}\left(\int_0^s\EP \left\{g(s-r,\Xx(r))\right\}dr\right); \ 1\le i,j\le d,
\label{lvr}
\end{equation}
where $g(r,x)=c(x)v(r,x)$.   Fix an arbitrary pair $i,j$ and let
$$v_1(s,x)\eqdef D_{ij} \EP f(\Xx(s)) \mbox{ and }v_2(s,x)\eqdef  D_{ij}\int_0^s\EP \left\{g(s-r,\Xx(r))\right\}dr.$$
We see from the boundedness of $f$ and the H\"older and boundedness assumptions on $D_{ij} f$ that
\begin{equation}
\begin{split}
\left|v_1(s,x)-v_1(s,y)\right|&=\left|\EP\left[D_{ij} f(x+X^0(s))-D_{ij} f(y+X^0(s))\right]\right|\\
&\le \EP\left|D_{ij} f(x+X^0(s))-D_{ij} f(y+X^0(s))\right|\le C\left|x-y\right|^\alpha.
\end{split}
\label{Hol1}
\end{equation}
In fact, the boundedness of $f$ and Problem 3.1 in \cite{KS} imply that  $D_{ij} \EP f(\Xx(s))=\EP D_{ij} f(x+X^0(s))$ has derivatives of all orders
(in both $s$ and $x$, for $(s,x)\in(0,\infty)\times\Rd$),  and hence is Lipschitz in $x$.

Now, the boundedness of $v$ (implied by the boundedness of $f$ and the fact that $c\le0$) and the boundedness of $c$ imply the boundedness of $g$.
This, in addition to Theorem 2.6c in \cite{D} (in Chapter 4) yield
\begin{equation}
\begin{split}
v_2(s,x)=\int_0^s\int_{\Rd}g(s-r,y)D_{ij} p^{(d)}_r(x,y)drdy.
\end{split}
\label{v2}
\end{equation}
But $|D_{ij} v(s,x)|\le K_T$ $\forall (s,x)\in[0,T]\times\Rd$, for any time $T>0$, by assumption; and $D_{ij}v $ is continuous by Theorem 3.6 p.~140 in
\cite{D} (since $f$ and $c$ are both bounded by assumption and H\"older continuous because $D_{ij} f$ and $D_{ij} c$ are by assumption).  This, in addition to
the assumption that $D_{ij} c$ are all H\"older continuous and bounded imply that $D_{ij} g(s,x)$ is bounded on $[0,T]$ uniformly in $x$ and continuous.
So that, if $G(r,y)\eqdef D_{ij} g(r,y)$, then \eqref{v2} implies that
\begin{equation}
 v_2(s,x)=\int_0^s\int_{\Rd}p^{(d)}_r(x,y) D_{ij} g(s-r,y)drdy= \int_0^s\int_{\Rd}p^{(d)}_r(x,y) G(s-r,y)drdy,
\label{v22}
\end{equation}
and it follows by Theorem 2.6b p.~133-134 in \cite{D} (in which the assertion of differentiability and continuity of derivatives is unaffected if we replace
$|G|\le M$ with $|G(t,x)|\le M_T$ $\forall(t,x)\in[0,T]\times\Rd,\ \forall T$),  that
\begin{equation}
\left|v_2(s,x)-v_2(s,y)\right|\le C_T|x-y|, \forall s\in[0,T].
\label{Lipv2}
\end{equation}
Clearly, \eqref{Hol1} and \eqref{Lipv2} imply that $D_{ij} v(s,x)=v_1(s,x)+v_2(s,x)$ is H\"older continuous with exponent $\alpha$ in $x$ uniformly with
respect to $(s,x)\in[0,T]\times\Rd$.  So, by the boundedness assumptions on $ D_{ij} v$ and $D_{ij} c $ and the assumption that $D_{ij} c$ is H\"older continuous,
it follows that $D_{ij} g(s,x)$ is H\"older continuous with exponent $\alpha$ in $x$ uniformly with respect to $(s,x)\in[0,T]\times\Rd$.

Now, since $f,g$ and $D_{ij} f,D_{ij} g$ are bounded and H\"older continuous, then \eqref{vr}, \lemref{diffunderint} and \remref{DI} imply that
\begin{equation}
\begin{split}
&\Delta^2  \int_0^\infty p_t (0,s) v \left( s,x \right)ds\\&=\Delta^2\int_0^\infty p_t(0,s) \left(\EP f(\Xx(s))+\int_0^s\EP \left\{g(s-r,\Xx(r))\right\}dr\right) ds\\
&=\Delta^2\int_0^\infty p_t (0,s)\EP f(\Xx(s)) ds+\Delta^2\int_0^\infty p_t (0,s)\int_0^s\EP \left\{g(s-r,\Xx(r))\right\}drds\\
&=\int_0^\infty p_t (0,s)\Delta^2\EP f(\Xx(s)) ds+\int_0^\infty p_t (0,s)\int_0^s\Delta^2\EP \left\{g(s-r,\Xx(r))\right\}drds\\
&=\int_0^\infty p_t (0,s)\left(\Delta^2\EP f(\Xx(s))+\Delta^2\int_0^s\EP \left\{g(s-r,\Xx(r))\right\}dr\right)ds\\&=\int_0^\infty p_t (0,s) \Delta^2 v \left( s,x \right)ds.
\end{split}
\label{ur}
\end{equation}
So, we can pull the operator $\Delta^2$ outside the integral in \eqref{FKGPDE} as desired.

Obviously, $u(0,x)=f(x)$, and rewriting $u$ as
$$\utx=\EP\left[f(x+\BTPz)\exp\left(\int_0^{|B(t)|}c(x+X^0(r))dr\right)\right]$$
and noticing that $f$ is bounded and continuous and $c\le0$, bounded, and continuous, we can use the bounded convergence theorem to conclude
$$f(x)={\dstyle\lim_{\substack{t\downarrow0\\y\to x}}} u(t,y).$$so that $\utx$ solves \eqref{BTPFKPDE}.
\end{proof}
\textbf{Acknowledgement} I'd like to thank the referee for his nice and constructive comments.


\begin{thebibliography}{30}
\bibitem{AZ} H.~Allouba~and~W.~Zheng,~{\it Brownian-time processes: the PDE Connection and the half-derivative generator}.  Ann. of Probab.
{\bf 29} no. 4, (2001), 1780--11795.
\bibitem{Allouba1} H.~Allouba,~{\it Measure-valued Brownian-time processes: the PDE connection}. (2001).  In preparation.
\bibitem{AKSCH}  H.~Allouba,~{\it Brownian-time type processes: the PDE Connection III}. (2001).  In preparation
\bibitem{As} R.~Ash,~{\it Real analysis and probability}.  Academic Press, Inc.~New York, 1972.
\bibitem{Ba1} R.~Bass,~{\it Probabilistic techniques in analysis}.  Springer-Verlag, New York, 1995.
\bibitem{Ba2} R.~Bass,~{\it Diffusions and elliptic operators}.  Springer-Verlag, New York, 1997.
\bibitem{Burdzy1} K.~Burdzy,~{\it Some path properties of iterated Brownian  motion\/}.
 Seminar on Stochastic Processes 1992, Birkh\"auser, (1993), 67--87.
\bibitem{Burdzy2} K.~Burdzy,~{\it Variation of iterated Brownian motion}.
Workshop and conf. on measure-valued processes, stochastic PDEs and interacting
particle systems. CRM Proceedings and Lecture Notes {\bf 5}, (1994),  35--53.
\bibitem{BD1} K.~Burdzy  and D.~Khoshnevisan,~{\it The level sets of iterated
Brownian motion}. S\'eminaire de Probabilit\'es XXIX. Lecture Notes in Math. {\bf1613}, (1995),  231--236.
\bibitem{BD2} K.~Burdzy  and  D.~Khoshnevisan,~{\it Brownian motion
in a Brownian crack} . Ann. Appl. Probab. {\bf 8} no. 3, (1998), 708--748.
\bibitem{D} R.~Durrett~{\it Stochastic calculus. a practical introduction}.
Probability and Stochastics Series. CRC Press, Boca Raton, FL. 1996.
\bibitem{El}  K.~D.~Elworthy,~{\it Stochastic differential equations on manifolds},
Cambridge University Press. 1982.
\bibitem{F} T.~Funaki, {\it Probabilistic construction of the solution of
some higher order parabolic differential equation}. Proc. Japan
Acad. Ser. A Math. Sci. {\bf 55} no. 5, (1979), 176--179.
\bibitem{H} K.~Hochberg and E. Orsingher, {\it Composition of stochastic processes governed by higher-order parabolic and hyperbolic equations}.
J. Theoret. Probab. {\bf 9}, no. 2, (1996), 511--532.
\bibitem{KS} I.~Karatzas~and~S.~Shreve, {\it Brownian motion and stochastic calculus}.  Springer-Verlag, 1988.
\bibitem{DL} D.~Khoshnevisan and T.~Lewis {\it Iterated Brownian motion and its intrinsic skeletal structure}.
Seminar on Stochastic Analysis, Random Fields and Applications (Ascona, 1996), 201--210, Progr. Probab., 45, Birkhäuser, Basel, 1999.
\bibitem{Ku} H.~Kunita,~{\it Stochastic flows and stochastic differential equations},  Cambridge University Press. 1990.
\bibitem{LeGall1} J-F.~Le Gall,~{\it Solutions positives de $\Delta u =u^2$ dans
le disque unit\'e}. C. R. Acad. Sci. Paris S\'erie I. {\bf 317}, (1993),  873--878.
\bibitem{LeGall2}J-F.~Le Gall,~{\it A path-valued Markov process and its connections with partial differential equations}. First European Congress of
Mathematics, Vol. II (Paris, 1992). {\it Progr. Math.} {\bf 120}, Birkhäuser, (1994), 185--212.
\bibitem{LeGall3} J-F.~Le Gall,~{\it  The Brownian snake and solutions
of $\Delta u=u\sp 2$ in a domain}. Probab. Theory Related Fields. {\bf 102} no. 3, (1995), 393--432.
\bibitem{lz} T.~Lyons, and W.~Zheng,~{\it On conditional diffusion processes. Proc. Roy. Soc. Edinburgh.} {\bf 115}, (1990), 243-255.
\bibitem{z} W.~Zheng,~{\it Conditional propagation of chaos and a class of quasilinear PDE's}. Ann. Probab. {\bf 23} no. 3, (1995), 1389--1413.
\end{thebibliography}
\end{document}